\def\titlerunning#1{\gdef\titrun{#1}}
\def\author#1{\gdef\autrun{\def\and{\unskip, }#1}\gdef\@author{#1}}
\def\address#1{{\def\and{\\\hspace*{18pt}}\renewcommand{\thefootnote}{}%
		\footnote {#1}}%
	\markboth{\autrun}{\titrun}}
\def\email#1{e-mail: #1}
\def\keywords#1{\par\medskip
	\noindent\textbf{Keywords.} #1}
\newtheorem{theorem}{Theorem}[section]
\newtheorem{corollary}[theorem]{Corollary}
\newtheorem{lemma}[theorem]{Lemma}
\newtheorem{proposition}[theorem]{Proposition}
\theoremstyle{definition}
\newtheorem{definition}[theorem]{Definition}
\newtheorem{remark}[theorem]{Remark}
\theoremstyle{example}
\numberwithin{equation}{section}
\newtheorem{lemma A.}{Lemma  A.}
\def \C {\mathbb{C}}
\def \a {\alpha }
\def \b {\beta}
\def \de {\delta}
\def \De {\Delta}
\def \La {\Lambda}
\def\w {\omega}
\def\Om{\Omega}
\def\na {\nabla}
\begin{document}
\baselineskip=17pt

\titlerunning{Vanishing theorems on complete Riemannian manifold with a parallel $1$-form}
\title{Vanishing theorems on complete Riemannian manifold with a parallel $1$-form }

\author{Teng Huang and Qiang Tan}

\date{}

\maketitle

\address{T. Huang: School of Mathematical Sciences, University of Science and Technology of China; Key Laboratory of Wu Wen-Tsun Mathematics, Chinese Academy of Sciences, Hefei, 230026, P.R. China; \email{htmath@ustc.edu.cn;htustc@gmail.com}}
\address{Q. Tan: Faculty of Science, Jiangsu University, Zhenjiang, Jiangsu 212013, P.R. China; \email{tanqiang@ujs.edu.cn}}
\begin{abstract}
In this article, we first consider the $L^{2}$ \textit{Morse-Novikov cohomology} on a complete Riemannian manifold $M$ equipped with a parallel $1$-form which includes Vaisman manifold. Based on a vanishing theorem of $L^{2}$ \textit{Morse-Novikov cohomology}, we prove that the $L^{2}$-harmonic forms on $M$ are identically zero. 
\end{abstract}
\keywords{ $L^{2}$ Morse-Novikov cohomology, Vanishing theorem}
\section{Introduction}
Let $M$ be a complete $n$-dimensional Riemannian manifold. A basic question, pertaining both the function theory and topology on $M$, is: when are there non-trivial harmonic $k$-forms on $M$? When $X$ is not compact, a growth condition on the harmonic forms at infinity must be imposed, in order that the answer to this question would be useful. A natural growth condition is square-integrable, if $\Om^{k}_{(2)}(X)$ denotes the $L^{2}$-forms of degree $k$ on $M$ and $\mathcal{H}^{k}_{(2)}(X)$ the harmonic forms in $\Om^{k}_{(2)}(X)$. One version of this basic question is: what is the structure of $\mathcal{H}^{k}_{(2)}(X)$?

The Hodge theorem for compact manifolds states that every real cohomology class of a compact manifold $M$ is represented by a unique harmonic form. That is, the space of solutions to the differential equation $(d+d^{\ast})\a=0$ on $L^{2}$-forms over $M$ is a space that depends on the metric of $M$. This space is canonically isomorphic to the purely topological real cohomology space of $M$. The study of $\mathcal{H}^{k}_{(2)}(M)$, a question of so-called $L^{2}$-cohomology of $M$, is rooted in the  attemption extending  Hodge theory to non-compact manifolds. No such result holds in general for complete non-compact manifolds, but there are numerous partial results about the $L^{2}$-cohomology of non-compact manifold. The study of the $L^{2}$-harmonic forms on a complete Riemannian manifold is a very  fascinating and significant subject. There has been some recent interest in the study of $L^{2}$ harmonic forms on certain non-compact moduli spaces occurring in gauge theory \cite{Hitchin}.

Let $M$ be a Riemannian manifold equipped with a differential form $\w$. This form is called parallel if $\w$ is preserved by the Levi-Civita connection: $\na\w=0$. This identity gives a powerful restriction on the holonomy group $Hol(M)$. The structure of $Hol(M)$ and its relation to the geometry of a manifold is one of the main subjects of Riemannian geometry of the last 50 years.  Suppose that $(M,\w)$ is a complete, K\"{a}hler manifold of complex dimension $n$ and $\w$ is the K\"{a}hler form on $M$. In K\"{a}hler geometry (holonomy $U(n)$) the parallel forms are the K\"{a}hler form and its powers. The metric induced by K\"{a}hler form allows one to define the class of square-integrable forms of all bi-degrees, $\Om^{p,q}_{(2)}(M)$.  We denote by $\mathcal{H}_{(2)}^{p,q}(M)$ the space of $L^{2}$-harmonic $(p,q)$-forms. There are many articles study the K\"{a}hlerian case \cite{CX,Don1, Don2, DF, Gromov,McNeal}. In many situations, e.g., $(M,\w)=hyperbolic$ upper half plane in $\C^{n}$, it happens that $\mathcal{H}^{p,q}_{(2)}(M)=\{0\}$ unless $p+q=n$. The middle dimension, when $p+q=n$, is always a special case. For example, there are no results in \cite{McNeal} about $L^{2}$ harmonic forms in these dimensions.

The main object of the present paper is the following notion. Let $(M,g,\eta)$ be a simply-connected manifold equipped with a parallel $1$-form $\eta$. We prove a vanishing theorem as follows.
\begin{theorem}\label{T3}
Let $M$ be a complete, simply-connected $n$-manifold and $\eta$ a closed non-zero $1$-form on $M$. Suppose that $g$ is a Riemannian metric on $M$ such that $\eta$ is parallel with respect to $g$: $\na \eta=0$. Then for any $0\leq k\leq n$, the spaces $\mathcal{H}^{k}_{(2)}(M)$ of $L^{2}$-harmonic $k$-forms are trivial.
\end{theorem}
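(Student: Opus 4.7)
The plan is to sidestep the Morse--Novikov machinery and derive the vanishing directly from the rigid structure that a nowhere-vanishing parallel $1$-form imposes on a simply-connected complete manifold. Since $\na \eta = 0$ we have $d|\eta|^{2} = 2g(\na\eta,\eta) = 0$, so $|\eta|$ is a positive constant; after rescaling we may assume $|\eta|\equiv 1$, and then $\eta^{\sharp}$ is a nowhere-vanishing parallel vector field. The line subbundle $\mathbb{R}\eta^{\sharp} \subset TM$ and its orthogonal complement $\ker\eta$ are both parallel, hence invariant under the holonomy representation of $(M,g)$. Because $M$ is complete and simply connected, the de Rham decomposition theorem furnishes an isometric splitting $M = \mathbb{R}\times N$ in which $\eta^{\sharp}= \pa_{t}$ and $\eta = dt$, with $N$ a complete Riemannian $(n-1)$-manifold.

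Next I would decompose $L^{2}$-forms along the product structure: any smooth $k$-form on $\mathbb{R}\times N$ is written uniquely as $\alpha = \beta + dt\wedge\gamma$, where $\beta$ and $\gamma$ are a $k$-form and a $(k-1)$-form on $N$, respectively, each depending smoothly on $t$. A standard computation on Riemannian products gives $\Delta_{M} = -\pa_{t}^{2} + \Delta_{N}$ acting componentwise, so the harmonicity of $\alpha \in \H^{k}_{(2)}(M)$ is equivalent to $(-\pa_{t}^{2} + \Delta_{N})\beta = 0$ and $(-\pa_{t}^{2} + \Delta_{N})\gamma = 0$ with $\beta,\gamma \in L^{2}(\mathbb{R}, L^{2}\Omega^{\bullet}(N))$. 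Applying the Fourier transform in $t$ converts the first equation into $(\la^{2} + \Delta_{N})\hat\beta(\la,\cdot) = 0$, so for a.e.\ $\la$ the slice $\hat\beta(\la,\cdot)$ is an $L^{2}$-eigenform of $\Delta_{N}$ with eigenvalue $-\la^{2}$. Since the Friedrichs extension of $\Delta_{N}$ on $L^{2}\Omega^{\bullet}(N)$ is non-negative, $\hat\beta(\la,\cdot)$ must vanish for every $\la\neq 0$; as $\{0\}$ has Lebesgue measure zero in $\mathbb{R}$, Plancherel forces $\beta\equiv 0$, and the same argument gives $\gamma\equiv 0$, hence $\alpha=0$.

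The main technical point I expect to be careful with is the functional-analytic setup behind the Fourier step: one has to invoke essential self-adjointness (or at worst the Friedrichs extension) of $\Delta_{N}$ on $L^{2}\Omega^{\bullet}(N)$, which is available because $N$ inherits completeness from $M$, and rephrase the argument as a joint spectral statement for the commuting self-adjoint operators $-\pa_{t}^{2}$ and $\Delta_{N}$ on $L^{2}(\mathbb{R})\otimes L^{2}\Omega^{\bullet}(N)$. Once that is in place, the vanishing is automatic since $-\pa_{t}^{2}$ has no atom at $0$ in its spectral measure. The paper's own route through an $L^{2}$ Morse--Novikov vanishing presumably bypasses the de Rham splitting by exploiting a Bochner-type identity of the shape $\Delta_{\eta} = \Delta + |\eta|^{2} + (\text{first-order terms})$ valid for parallel $\eta$, together with a comparison between $d$- and $d_{\eta}$-harmonic representatives; my approach trades that Bochner computation for a single appeal to de Rham's decomposition theorem.
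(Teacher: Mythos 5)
Your argument is correct, but it takes a genuinely different route from the paper. The paper proves Theorem \ref{T3} by combining two ingredients: (i) Theorem \ref{T1}, a cutoff-function/integration-by-parts argument showing that the $L^{2}$ Morse--Novikov harmonic space $\mathcal{H}^{k}_{(2),\eta}(M)$ vanishes for any non-zero parallel $\eta$ (using the Cartan-type identity $\mathcal{L}_{U}\a=-\a+d_{\eta}(\textit{i}_{U}\a)$ for $d_{\eta}$-harmonic $\a$); and (ii) Lemma \ref{P9}, which uses simple-connectedness to write $\eta=df$ with $f$ of linear growth (Lemma \ref{L1}), so that $\eta$ is $d$(sublinear) and the Cao--Xavier-style Theorem \ref{T9} yields $e(\eta)\a=0$ and $\textit{i}_{U}\a=0$, placing $\mathcal{H}^{k}_{(2)}(M)$ inside $\mathcal{H}^{k}_{(2),\eta}(M)$ --- so your closing guess that the paper runs a Bochner identity is not what happens, though that does not affect your proof. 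You instead invoke the de Rham decomposition theorem to split $M=\R\times N$ isometrically with $\eta=dt$, and then kill $L^{2}$-harmonic forms by the spectral argument for $-\pa_{t}^{2}+\De_{N}$ (equivalently, the $L^{2}$ K\"unneth formula together with $\mathcal{H}^{\ast}_{(2)}(\R)=0$). This is legitimate: completeness and simple-connectedness are exactly what de Rham's theorem needs, $N$ inherits completeness so $\De_{N}$ is essentially self-adjoint and non-negative, and the kernel of $A\otimes I+I\otimes B$ for commuting non-negative self-adjoint operators is $\ker A\,\bar\otimes\,\ker B$, which vanishes since $-\pa_{t}^{2}$ has trivial $L^{2}$-kernel on $\R$. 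In fact the paper itself sketches precisely this alternative in its final Remark (for the Vaisman case), citing Zucker's $L^{2}$ K\"unneth formula. What each approach buys: yours is shorter and conceptually transparent once the splitting is granted, but leans on the global structure theorem and the functional-analytic tensor-product formalism; the paper's route stays at the level of integration by parts with cutoff functions and the $d$(sublinear) condition, which is the more flexible technique when the form is not parallel but merely has a sublinearly growing primitive, and it yields the intermediate Morse--Novikov vanishing (Theorem \ref{T1}) as a statement of independent interest.
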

Let $(M,J,g)$ be a connected complex Hermitian manifold of complex dimension
at least $2$. Denote by $\w$ its fundamental Hermitian two-form, with the
convention $\w(X,Y)=g(X,JY)$.  A locally conformally K\"{a}hler manifold $(M,J,g,\w,\theta)$ is called Vaisman if $\na\theta=0$, where $\na$ is the Levi-Civita connection of metric $g$. The Vaisman manifold is a distinguished class among the LCK manifolds. 
\begin{theorem}\label{T2}
Let $(M,J,g,\w,\theta)$ be a complete, simply-connected, Vaisman manifold of complex dimension $n$.  Then for any $0\leq k\leq n$, the spaces $\mathcal{H}^{k}_{(2)}(M)$ of $L^{2}$-harmonic $k$-forms are trivial.
\end{theorem}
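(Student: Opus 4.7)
The plan is to obtain Theorem \ref{T2} as an immediate consequence of Theorem \ref{T3} by verifying that the Lee form $\theta$ of a Vaisman manifold fulfils every hypothesis imposed on the parallel $1$-form in Theorem \ref{T3}. Since the definition of a Vaisman structure already supplies $\na\theta=0$, the bulk of the work is to collect the elementary facts about $\theta$ and then invoke Theorem \ref{T3} applied to the underlying real Riemannian manifold $(M,g)$.

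First I would record three properties of $\theta$. By the very definition recalled just before the statement, $\theta$ is parallel: $\na\theta=0$. Parallelism of a $1$-form forces $d\theta=0$, because the exterior derivative is the antisymmetrisation of $\na$; equivalently, for any vector fields $X,Y$, $d\theta(X,Y)=(\na_X\theta)(Y)-(\na_Y\theta)(X)=0$. Finally, a Vaisman manifold is by convention a \emph{proper} LCK manifold, so the Lee form $\theta$ is not identically zero; otherwise the manifold would be globally K\"ahler rather than Vaisman. Thus $\theta$ is a closed, non-zero, parallel $1$-form on $M$.

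Second, the underlying Riemannian manifold $(M,g)$ is complete and simply-connected by hypothesis and has real dimension $2n$. Applying Theorem \ref{T3} to the triple $(M,g,\theta)$ then yields $\H^{k}_{(2)}(M)=\{0\}$ for every $0\leq k\leq 2n$, which in particular covers the range $0\leq k\leq n$ asserted in the statement.

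I do not anticipate a real obstacle here: the proof is essentially a specialisation of Theorem \ref{T3}, and the only point that deserves care is the bookkeeping about the definition of Vaisman, namely ensuring that $\theta\neq 0$ (so that Theorem \ref{T3} is applicable) and that $d\theta=0$ is derived from $\na\theta=0$ rather than tacitly assumed. If one wished to allow $\theta\equiv 0$ inside the Vaisman class, the statement would reduce to the K\"ahler case and would require a separate argument, but under the standard convention excluded here.
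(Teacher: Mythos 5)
Your proof is correct and follows exactly the paper's route: the paper's proof of Theorem \ref{T2} is the single line ``the conclusion follows from Theorem \ref{T3},'' i.e.\ one applies Theorem \ref{T3} to the parallel Lee form $\theta$ on the underlying complete simply-connected Riemannian manifold. Your additional bookkeeping (deriving $d\theta=0$ from $\na\theta=0$ and noting $\theta\neq 0$, which the paper encodes via $|\theta|=2c$ with $c\neq 0$) is consistent with the paper's conventions and fills in exactly the details the paper leaves implicit.
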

The organization of this paper is as follows. In section 2, we recall some basic notations and well-known results on $L^{2}$-harmonic forms. Using the ideas in \cite{LLMP}, we prove that on a complete manifold, the space of the $L^{2}$ \textit{Morse-Novikov} harmonics with respect to a non-zero  parallel $1$-form is trivial (see Theorem \ref{T1}). In Section 3, we extend the ideas in \cite{CX} to the Riemannian manifold with parallel $1$-form case. Then we can prove that the $L^{2}$-harmonic form is also in the space of $L^{2}$ \textit{Morse-Novikov} harmonic forms (see Lemma \ref{P9}). At last, using these results, we deduce that on a complete simply-connected Vaisman manifold, the space of $L^{2}$-harmonic forms also is trivial.
\section{Preliminaries}
\subsection{$L^{2}$-harmonic forms}
Let $(M,g)$ be a complete Riemannian manifold. Let $\Om^{k}(M)$ and $\Om^{k}_{0}(M)$ denote the smooth $k$-forms on $M$ and the smooth $k$-forms with compact support on $M$. Let $\langle\cdot,\cdot\rangle$ denote the pointwise inner product on $\Om^{\ast}(M)$ given by $g$. The global inner product is defined by
$$(\a,\b)=\int_{M}\langle\a,\b\rangle d\rm{Vol}.$$
We also write $|\a|^{2}=\langle\a,\a\rangle$, $\|\a\|^2_{L^{2}(M,g)}=\int_{M}|\a|^{2}d\rm{Vol}$ and let
$$\Om^{k}_{(2)}(M)=\{\a\in\Om^{k}(M):\|\a\|_{L^{2}(M,g)}<\infty\}.$$
The operator of exterior differentiation is $d:\Om^{k}_{0}(M)\rightarrow\Om^{k+1}_{0}(M)$, and it satisfies $d^{2}=0$; its formal adjoint is $\de:\Om^{k+1}_{0}(M)\rightarrow\Om^{k}_{0}(M)$; we have
$$\forall\a\in\Om^{k}_{0}(M),\ \forall\b\in\Om^{k+1}_{0}(M),\ \int_{M}\langle d\a,\b\rangle=\int_{M}\langle\a,\de\b\rangle.$$
We can define
\begin{equation*}
\begin{split}
\mathcal{H}_{(2)}^{k}(M)=\{\a\in \Om^{k}_{(2)}(M): d\a=0,\ \de\a=0 \}.\\
\end{split}
\end{equation*}
Noticing that the operator $d+\de$ is elliptic. By elliptic regularity, we then have : $\mathcal{H}^{k}_{(2)}(M)\subset\Om^{k}(M)$. The space $\Om^{k}_{(2)}(M)$ has the following of Hodge-de Rham-Kodaira orthogonal decomposition
$$\Om^{k}_{(2)}(M)=\mathcal{H}^{k}_{(2)}(M)\oplus\overline{d(\Om^{k-1}_{0}(M))}\oplus\overline{\de(\Om^{k+1}_{0}(M))  },$$
where the closure is taken with respect to the $L^{2}$ topology \cite{Carron}.

\subsection{Morse-Novikov cohomology}
Let $M$ be a differential manifold and $\eta$ a closed $1$-form on $M$. The \textit{Morse-Novikov cohomology} of a manifold $M$ refers to the cohomology of complex of smooth real form $\Om^{\ast}(M)$, with the differential operator  defined as follow
\begin{equation}\label{E2}
d_{\eta}=d+e(\eta),
\end{equation}
$d$ being the exterior differential and $e(\eta)$ the operator given by
\begin{equation}\label{E3}
e(\eta)(\a)=\eta\wedge\a,\ \forall\a\in\Om^{\ast}(M).
\end{equation}
Recall that the Morse-Novikov cohomology, also known as Lichnerowicz cohomology \cite{Lic} which defined independently by Novikov \cite{Nov} and Guedira-Lichnerowicz \cite{GL}. That is a cohomology of the complex $(\Om^{\ast}(M),d_{\eta})$
\begin{equation}\label{E1}
\Om^{0}(M)\xrightarrow{d_{\eta}}\Om^{1}(M)\xrightarrow{d_{\eta}}\Om^{2}(M)\xrightarrow{}\cdots
\end{equation}
Denote by $H_{\eta}^{\ast}(M)$ the cohomology of the complex $(\Om^{\ast}(M),d_{\eta})$. In fact, the sequence above is an acyclic resolution for $\ker d_{\eta}$, as each  $\Om^{\ast}(M)$ is soft, \cite[Proposition 2.1.6 and Theorem 2.1.9]{Dimca}. Thus, by taking global sections in (\ref{E1}), we compute the cohomology groups of $M$ with values in the sheaf $\ker d_{\eta}$, $H^{i}(M,\ker d_{\eta})$. What we obtain is actually the Morse-Novikov
cohomology.
\begin{proposition}(\cite[Proposition 4.4]{LLMP})
Let $M$ be a differentiable manifold and $\eta$ a closed $1$-form on $M$. Then, \\
(i) The differential complex $(\Om^{\ast}(M),d_{\eta})$ is elliptic. Thus, if $M$ is compact, the cohomology groups $H_{\eta}^{k}$ have finite dimension.\\
(ii) If $\eta$ is exact, then $H_{\eta}^{k}(M)\cong H^{k}_{dR}(M)$.
\end{proposition}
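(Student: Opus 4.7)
My plan is to handle parts (i) and (ii) by two short, essentially standard arguments. For part (i), I would begin by observing that $e(\eta)$ is a zeroth-order (purely algebraic) operator, so it does not affect the principal symbol of $d_{\eta}$. Hence for every $x\in M$ and every $\xi\in T^{\ast}_{x}M\setminus\{0\}$ one has $\sigma_{\xi}(d_{\eta})=\sigma_{\xi}(d)=e(\xi)$, and the associated symbol sequence is the Koszul complex
\[
0\to\Lambda^{0}T^{\ast}_{x}M\xrightarrow{e(\xi)}\Lambda^{1}T^{\ast}_{x}M\xrightarrow{e(\xi)}\cdots\xrightarrow{e(\xi)}\Lambda^{n}T^{\ast}_{x}M\to0,
\]
which is exact for every $\xi\neq 0$ (interior multiplication by $\xi/|\xi|^{2}$ furnishes the standard contracting homotopy). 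This gives ellipticity of the complex $(\Om^{\ast}(M),d_{\eta})$. When $M$ is compact, the finite-dimensionality of $H^{k}_{\eta}(M)$ then follows from the general Hodge theory of elliptic complexes: the associated Laplacian $\De_{\eta}:=d_{\eta}\de_{\eta}+\de_{\eta}d_{\eta}$ (with $\de_{\eta}$ the formal $L^{2}$-adjoint of $d_{\eta}$) is elliptic with finite-dimensional kernel, and a standard Hodge decomposition identifies $H^{k}_{\eta}(M)\cong\ker\De_{\eta}|_{\Om^{k}(M)}$.

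For part (ii), assume $\eta=df$ for some $f\in C^{\infty}(M)$. I would exhibit an explicit chain isomorphism between $(\Om^{\ast}(M),d)$ and $(\Om^{\ast}(M),d_{\eta})$ by multiplication by $e^{-f}$. A one-line Leibniz-rule computation yields $d_{\eta}(e^{-f}\a)=e^{-f}\,d\a$, so the degree-preserving map $\a\mapsto e^{-f}\a$ is a chain isomorphism, with inverse given by multiplication by $e^{f}$. Passing to cohomology gives $H^{k}_{\eta}(M)\cong H^{k}_{dR}(M)$.

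I do not expect any serious obstacle. The only points requiring a moment of care are (a) confirming that $e(\eta)$ is genuinely of order zero so that it is harmless in the symbol calculation, and (b) getting the signs right in the commutation between $d$ and multiplication by $e^{\pm f}$; both are routine, and the standard Koszul-exactness argument for ellipticity together with the Hodge theory for elliptic complexes can then be invoked as black boxes.
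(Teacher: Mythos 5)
Your argument is correct. The paper itself offers no proof of this proposition --- it is quoted directly from \cite{LLMP} as a recalled background fact --- and your two steps (the symbol of $d_{\eta}$ equals that of $d$ because $e(\eta)$ is order zero, so the symbol sequence is the exact Koszul complex, giving ellipticity and hence finite-dimensional cohomology on compact $M$; and, for $\eta=df$, the chain isomorphism $\a\mapsto e^{-f}\a$ satisfying $d_{\eta}(e^{-f}\a)=e^{-f}d\a$) constitute exactly the standard proof found in that reference.
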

In general, if the $1$-form $\eta$ is not exact then $H_{\eta}^{k}(M)\ncong H^{k}_{dR}(M)$. We recall some results proved by Gu\'{e}dira-Lichnerowicz \cite{GL} which will be useful in the sequel. Let $M$ be a smooth $n$-dimensional manifold, $\eta$ a closed $1$-form on $M$ and  $g$ be a Riemannian metric of $M$. Consider the vector field $U$ on $M$ characterized by the condition $\eta(X)=g(X,U)$, for all vector filed $X$ on $M$. Denote by $\textit{i}_{U}$ the contraction by the vector field $U$, that is, for any  $\a\in\Om^{k}(M)$, we have
\begin{equation}\label{E4}
\textit{i}_{U}(\a)=(-1)^{nk+n}(\ast\circ e(\eta)\circ\ast)(\a).
\end{equation}
Denote by $\ast$ the Hodge star operator with respect to metric $g$. Then, we define the operator $\de_{\eta}:\Om^{k}_{0}(M)\rightarrow\Om^{k-1}_{0}(M)$ by
\begin{equation}\label{E9}
\de_{\eta}=\de+\textit{i}_{U}.
\end{equation}
Then, it easy to prove that 
$$(d_{\eta}\a,\b) =(\a,\de_{\eta}\b),\  \forall \a\in\Om_{0}^{k-1}(M), \b\in\Om_{0}^{k}(M).$$
If $M$ is compact, since the complex $(\Om^{\ast}(M),d_{\eta})$ is elliptic, we obtain an orthogonal decomposition of the space $\Om^{k}(M)$ as follows
\begin{equation}\label{E5}
\Om^{k}(M)=\mathcal{H}^{k}_{\eta}(M)\oplus d_{\eta}(\Om^{k-1}(M))\oplus\de_{\eta}(\Om^{k+1}(M)),
\end{equation}
where  $$\mathcal{H}_{\eta}^{k}(M)=\{\a\in\Om^{k}(M):d_{\eta}(\a)=0, \de_{\eta}(\a)=0 \}.$$
From (\ref{E5}), it follows that $H_{\eta}^{k}(M)\cong\mathcal{H}^{k}_{\eta}(M)$.  But Morse-Novikov cohomology $H^{i}_{\eta}(M)$ is different to de Rham cohomology, it is not a topological invariant, it depends on $[\eta]\in H^{1}_{dR}(M)$.  Also, Riemannian properties involving this one-form can be important. For instance, it was shown in \cite{LLMP} that if there exists a Riemannian metric $g$ and a closed one-form $\eta$ on a compact manifold $M$  such that $\eta$ is parallel with respect to $g$, then  $H^{i}_{\eta}(M)=0$ for any $i\geq 0$.

In complete non-compact case, we now define the spaces of generalized $L^{2}$-harmonic forms  as follows:
$$\mathcal{H}_{(2),\eta}^{k}(M)=\{\a\in\Om^{k}_{(2)}(M):d_{\eta}(\a)=0, \de_{\eta}(\a)=0\}.$$
Following the idea in \cite{LLMP}, we have
\begin{theorem}\label{T1}
Let $M$ be a complete $n$-dimensional manifold and $\eta$ a closed non-zero $1$-form on $M$. Suppose that $g$ is a Riemannian metric on $M$ such that $\eta$ is parallel with respect to $g$: $\na\eta=0$. Then, $\mathcal{H}^{\ast}_{(2),\eta}$ is trivial.
\end{theorem}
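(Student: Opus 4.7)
My plan is to carry over the compact-case Bochner argument of Gu\'edira--Lichnerowicz/LLMP to the complete non-compact setting by means of standard completeness-based cutoff functions. The starting point is to record the structural consequences of $\nabla\eta=0$: the pointwise norm $c:=|\eta|^{2}$ is a positive constant, the metric dual vector field $U$ (with $\eta=g(U,\cdot)$) is parallel, and therefore $d\eta=0$ (assumed), $\de\eta=-\mathrm{div}\,U=0$, and $\mathcal{L}_{U}\a=\nabla_{U}\a$ on all forms.

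The central step is to establish the Weitzenb\"ock-type identity
\begin{equation*}
\De_{\eta}:=d_{\eta}\de_{\eta}+\de_{\eta}d_{\eta}=\De+c.
\end{equation*}
Expanding using $d_{\eta}=d+e(\eta)$ and $\de_{\eta}=\de+\textit{i}_{U}$ produces four pieces: the Hodge Laplacian $\De=d\de+\de d$; the anticommutator $d\,\textit{i}_{U}+\textit{i}_{U}d=\mathcal{L}_{U}$; its formal $L^{2}$-adjoint $\de e(\eta)+e(\eta)\de=\mathcal{L}_{U}^{\ast}$; and the zero-order term $e(\eta)\textit{i}_{U}+\textit{i}_{U}e(\eta)=\eta(U)=c$. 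The key cancellation $\mathcal{L}_{U}+\mathcal{L}_{U}^{\ast}=\nabla_{U}+\nabla_{U}^{\ast}=0$ uses $\nabla U=0$ (so that $\mathcal{L}_{U}$ reduces to $\nabla_{U}$) together with $\mathrm{div}\,U=0$ (so that $\nabla_{U}$ is formally skew-adjoint), and leaves $\De_{\eta}=\De+c$.

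For the $L^{2}$-vanishing, take $\a\in\mathcal{H}^{k}_{(2),\eta}(M)$; by elliptic regularity of $d_{\eta}+\de_{\eta}$ such an $\a$ is smooth. Completeness of $(M,g)$ gives cutoffs $\phi_{R}\in C_{0}^{\infty}(M)$ with $\phi_{R}\equiv 1$ on a geodesic ball $B_{R}$, $\mathrm{supp}\,\phi_{R}\subset B_{2R}$, and $|\na\phi_{R}|\leq 2/R$. The Leibniz rules, combined with $d_{\eta}\a=\de_{\eta}\a=0$, yield $d_{\eta}(\phi_{R}\a)=d\phi_{R}\wedge\a$ and $\de_{\eta}(\phi_{R}\a)=-\textit{i}_{\na\phi_{R}}\a$, hence
\begin{equation*}
\|d_{\eta}(\phi_{R}\a)\|^{2}+\|\de_{\eta}(\phi_{R}\a)\|^{2}\leq \frac{C}{R^{2}}\|\a\|_{L^{2}(B_{2R}\setminus B_{R})}^{2}\lra 0.
\end{equation*}
On the other hand, $\phi_{R}\a$ is smooth with compact support, so integration by parts together with the Weitzenb\"ock identity gives
\begin{equation*}
\|d_{\eta}(\phi_{R}\a)\|^{2}+\|\de_{\eta}(\phi_{R}\a)\|^{2}=(\De_{\eta}(\phi_{R}\a),\phi_{R}\a)=\|d(\phi_{R}\a)\|^{2}+\|\de(\phi_{R}\a)\|^{2}+c\,\|\phi_{R}\a\|^{2}\geq c\,\|\phi_{R}\a\|^{2}.
\end{equation*}
Letting $R\to\infty$ forces $c\,\|\a\|^{2}\leq 0$, and since $c>0$ we conclude $\a\equiv 0$.

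The main obstacle I expect is verifying the Weitzenb\"ock identity cleanly, specifically the cancellation $\mathcal{L}_{U}+\mathcal{L}_{U}^{\ast}=0$: this is where the full strength of the parallelism hypothesis is needed (closedness alone gives $d_{\eta}^{2}=0$ but not the vanishing of the first-order part), and it is the ingredient that reduces the problem to the clean zero-order inequality $\De_{\eta}\geq c$. Everything downstream is a routine completeness/cutoff argument.
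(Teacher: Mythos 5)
Your proof is correct, and it reorganizes the argument along a genuinely different line from the paper. The paper works with the twisted Cartan homotopy formula $d_{\eta}\circ\textit{i}_{U}+\textit{i}_{U}\circ d_{\eta}=\mathcal{L}_{U}+\mathrm{Id}$ (after normalizing $|\eta|=1$): for $\a\in\mathcal{H}^{k}_{(2),\eta}$ this gives $\mathcal{L}_{U}\a=-\a+d_{\eta}(\textit{i}_{U}\a)$, and the authors then pair with cutoff multiples $f_{j}\a$ and show separately that $(\mathcal{L}_{U}\a,f_{j}\a)\to 0$ (using the asymptotic skew-adjointness of $\mathcal{L}_{U}$) and that the $d_{\eta}(\textit{i}_{U}\a)$ contribution vanishes in the limit, leaving $\|\a\|^{2}=0$. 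You instead package the same two algebraic facts --- $e(\eta)\textit{i}_{U}+\textit{i}_{U}e(\eta)=|\eta|^{2}$ and the operator identity $\de e(\eta)+e(\eta)\de=-\mathcal{L}_{U}$ (which is exactly the paper's equation for $\mathcal{L}_{U}$ in terms of $\de$ and $e(\eta)$, and which your formal-adjoint derivation via $\mathcal{L}_{U}=\na_{U}$ and $\mathrm{div}\,U=0$ establishes correctly) --- into the single Weitzenb\"ock identity $\De_{\eta}=\De+|\eta|^{2}$, and then run one standard completeness cutoff computation. What your version buys is transparency: the result is exhibited as a spectral gap $\De_{\eta}\geq|\eta|^{2}>0$ on compactly supported forms, the two limiting estimates of the paper collapse into the single bound $C R^{-2}\|\a\|^{2}\geq c\,\|\phi_{R}\a\|^{2}$, and no subsequence extraction is needed. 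What the paper's version buys is that the homotopy formula $\mathcal{L}_{U}=-\mathrm{Id}+d_{\eta}\textit{i}_{U}+\textit{i}_{U}d_{\eta}$ on harmonic forms is the statement that generalizes most directly from the compact argument of de Le\'on--L\'opez--Marrero--Padr\'on. Both proofs rest on the same use of parallelism, so neither is more general, but yours is the cleaner write-up.
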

\begin{proof}
Since $\eta$ is a parallel and non-null it follows that $|\eta|=c$, with $c$ constant, $c>0$. Assume, without the loss of generality, that $c=1$. Note that if $c\neq 1$, we can consider the Riemannian metric $g'=c^{2}g$ and it is clear that the module of $\eta$ with respect to $g'$ is $1$ and that $\eta$ is also parallel with respect to $g'$. Under the hypothesis  $c=1$, we have that
\begin{equation}\label{E6}
\eta(U)=1.
\end{equation}
Using that $\eta$ is parallel and that $U$ is Killing, we obtain that (see (\ref{E4}))
\begin{equation}\label{E7}
\mathcal{L}_{U}=-\de\circ e(\eta)-e(\eta)\circ\de,
\end{equation}
\begin{equation}\label{E8}
\de\circ\mathcal{L}_{U}=\mathcal{L}_{U}\circ\de.
\end{equation}
From (\ref{E2}--\ref{E9}), (\ref{E6}) and (\ref{E8}), we deduce the following relations:
\begin{equation}\label{E10}
d_{\eta}\circ\textit{i}_{U}=-\textit{i}_{U}\circ d_{\eta}+\mathcal{L}_{U}+Id,\ \de_{\eta}\circ\textit{i}_{U}=-\textit{i}_{U}\circ\de_{\eta},
\end{equation}
\begin{equation}\label{E11}
d_{\eta}\circ\mathcal{L}_{U}=\mathcal{L}_{U}\circ d_{\eta}, \de_{\eta}\circ\mathcal{L}_{U}=\mathcal{L}_{U}\circ\de_{\eta},
\end{equation}
where $Id$ denotes the identity transformation.

Let $\xi:\mathbb{R}\rightarrow\mathbb{R}$ be smooth, $0\leq\xi\leq1$,
\begin{equation*}
\xi(t)=\left\{
\begin{aligned}
1, &  & t\leq0 \\
0,  &  & t\geq1
\end{aligned}
\right.
\end{equation*}
and consider the compactly supported function
\begin{equation*}
f_{j}(x)=\xi(\rho(x_{0},x)-j),
\end{equation*}
where $j$ is a positive integer and $\rho(x_{0},x)$ stands for the Riemannian distance between $x$ and a base point $x_{0}$.
On the other hand, (\ref{E7}) implies that
\begin{equation*}
\begin{split}
\langle \mathcal{L}_{U}\a,f_{j}\a\rangle&=-\langle\a, d\textit{i}_{U}(f_{j}\a)+\textit{i}_{U}d(f_{j}\a)\rangle\\
&=-\langle\a, f_{j}\mathcal{L}_{U}\a\rangle-\langle\a, \textit{i}_{U}(df_{j})\wedge\a\rangle\\
\end{split}
\end{equation*}
for all $\a\in\Om^{k}(M)$. Here we use the identity
\begin{equation*}
\mathcal{L}_{U}(\a\wedge\b)=(\mathcal{L}_{U}\a)\wedge\b+\a\wedge(\mathcal{L}_{U}\b).
\end{equation*}
Noting that
\begin{equation*}
|\langle\a, \textit{i}_{U}(df_{j})\wedge\a\rangle|\leq |df_{j}|\cdot|\a|^{2}.
\end{equation*}
Thus, there exists a subsequence $j_{i}\rightarrow\infty$ as $i\rightarrow\infty$ such that
\begin{equation}\label{E12}
\lim_{i\rightarrow\infty}\langle f_{j_{i}}\a, \mathcal{L}_{U}\a\rangle=0.
\end{equation}
Now, if $\a\in\mathcal{H}_{(2),\eta}^{k}$ then, using (\ref{E10}), we have that
\begin{equation}\label{E13}
\mathcal{L}_{U}\a=-\a+d_{\eta}(\textit{i}_{U}\a).
\end{equation}
Taking the $L^{2}$-inner of (\ref{E3}) with $f_{j_{i}}\a$ and integrating by parts, we obtain
\begin{equation}\label{E00}
\begin{split}
(\mathcal{L}_{U}\a,f_{j_{i}}\a)&=-(\a,f_{j_{i}}\a)+(\textit{i}_{U}\a,\de_{\eta}(f_{j_{i}}\a) )\\
&=-(\a,f_{j_{i}}\a)+(\textit{i}_{U}\a,(\pm)\ast(df_{j_{i}}\wedge\ast\a)+f_{j_{i}}\de_{\eta}\a )\\
&=-(\a,f_{j_{i}}\a)+(\textit{i}_{U}\a,(\pm)\ast(df_{j_{i}}\wedge\ast\a) ).\\
\end{split}
\end{equation}
Since $0\leq f_{j_{i}}\leq 1$ and $\lim_{i\rightarrow\infty}f_{j_{i}}(x)\a(x)=\a(x)$, it follows from the dominated convergence theorem that
\begin{equation}\label{E01}
 \lim_{i\rightarrow\infty}(\a,f_{j_{i}}\a)=\|\a\|^{2}.
 \end{equation}
 Since $|U|=1$ and $supp(df_{j_{i}})\subset B_{j_{i}+1}\backslash B_{j_{i}}$, one obtains
\begin{equation}\label{E02}
|(\textit{i}_{U}\a,\ast(df_{j_{i}}\wedge\ast\a) )|\leq C\int_{B_{j_{i}+1}\backslash B_{j_{i}}}|\a(x)|^{2}\rightarrow 0,\ as\ i\rightarrow\infty,
\end{equation}
where $C$ is a constant independent of $j_{i}$. It now follows from (\ref{E12}) and (\ref{E00})--(\ref{E02}) that $\a=0$. This proves that $\mathcal{H}^{k}_{(2),\eta}(M)=\{0\}$.
\end{proof}
\section{Vanishing theorems}
\subsection{Riemannian manifold with parallel $1$-form}
In this section, we recall some notations and definitions on differential geometry \cite{Ver}. Let $M$ be a smooth Riemannian manifold. We denote by $\Om^{\ast}(X)$ the smooth forms on $M$. Given an odd or even from $\a\in\Om^{\ast}(M)$, we denote by $\tilde{\a}$ its parity, which is equal to $0$ for even forms, and $1$ for odd forms. An operator $f\in\rm{End}(\La^{\ast}(M))$ preserving parity is called $even$, and one exchanging odd and even forms is odd; $\tilde{f}$ is equal to $0$ for even forms and $1$ for odd ones. Given a $C^{\infty}$-linear map $\Om^{1}(M)\xrightarrow{p}\Om^{odd}(M)$ or  $\Om^{1}(M)\xrightarrow{p}\Om^{even}(M)$, $p$ can be uniquely extended to a $C^{\infty}$-linear derivation $\rho$ on $\Om^{\ast}(M)$, using the rule
\begin{equation*}
\begin{split}
&\rho|_{\Om^{0}(M)}=0,\\
& \rho|_{\Om^{1}(M)}=p,\\
& \rho(\a\wedge\b)=\rho(\a)\wedge\b+(-1)^{\tilde{\rho}\tilde{\a}}\a\wedge\rho(\b).\\
\end{split}
\end{equation*}
Then, $\rho$ is an even (or odd) differentiation of the graded commutative algebra $\Om^{\ast}(M)$. Verbitsky gave a definition of the structure operator of $(M,\w)$, see \cite{Ver} Definition 2.1.
\begin{definition}\label{D2.1}
Let $M$ be a Riemannian manifold equipped with a parallel differential $k$-form $\eta$. Consider an operator $\underline{C}:\Om^{1}(M)\rightarrow\Om^{k-1}(M)$ mapping $\a\in\Om^{1}(M)$ to $\ast(\ast\eta\wedge\a)$. The corresponding differentiation
$$C:\Om^{\ast}(M)\rightarrow\Om^{\ast+k-2}(M)$$
is called the structure operator of $(M,\eta)$.
\end{definition}
\begin{definition}(\cite[Definition 2.3]{Ver})
Let $M$ be a Riemannian manifold, and $\eta\in\Om^{k}(M)$  a differential form, which is parallel with respect to the Levi-Civita connection. Denote by $d_{C}$ the supercommutator
$$\{d, C\}:= dC-(-1)^{\tilde{C}}Cd.$$
This operator is called the twisted de Rham operator of $(M,\w)$. Being a graded commutator of two graded differentiations, $d_{C}$ is also a graded differentiation of $\Om^{\ast}(M)$.
\end{definition}
\begin{lemma}\label{L4}(\cite[Proposition 2.5]{Ver})
Let $M$ be a Riemannian manifold equipped with a parallel differential $k$-form $\eta$, and  $L_{\eta}$ the operator $\a\mapsto\a\wedge\eta$. Then
$$d_{C}=\{L_{\eta},d^{\ast}\},$$
where $\{\cdot,\cdot\}$ denotes the supercommutator, and $d^{\ast}$ is the adjoint to $d$.
\end{lemma}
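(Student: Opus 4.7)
The plan is to verify the identity pointwise, using the parallelism of $\eta$ to reduce it to an algebraic computation in a suitable local frame. Both sides of $d_C = \{L_\eta, d^{\ast}\}$ are first-order linear differential operators on $\Om^\ast(M)$, so it suffices to check the identity at an arbitrary point $p \in M$. I fix such a point and choose a local orthonormal frame $\{e_i\}$ with dual coframe $\{e^i\}$ such that $(\nabla_{e_i} e_j)(p) = 0$. In such a frame, $\eta = \sum_I \eta_I\, e^I$ has locally constant coefficients at $p$, and one has the usual expressions
\[
d\a = \sum_i e^i \wedge \nabla_{e_i}\a, \qquad d^{\ast}\a = -\sum_i \iota_{e_i} \nabla_{e_i}\a
\]
valid at the point $p$.

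To compute the right-hand side, I use $\nabla\eta = 0$, which gives $\nabla_{e_i}(\eta\wedge\a) = \eta \wedge \nabla_{e_i}\a$, together with the graded Leibniz rule $\iota_{e_i}(\eta\wedge\b) = (\iota_{e_i}\eta)\wedge\b + (-1)^{k}\eta\wedge\iota_{e_i}\b$. A direct expansion of $L_\eta d^{\ast}\a$ and $d^{\ast}L_\eta\a$ then produces a cancellation of the $\eta\wedge d^{\ast}\a$ terms, yielding
\[
\{L_\eta, d^{\ast}\}\a \;=\; \pm\sum_i (\iota_{e_i}\eta)\wedge \nabla_{e_i}\a
\]
at $p$, for an overall sign depending only on $n$ and $k$.

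For the left-hand side, I test $d_C$ on a decomposable form $f e^I = f\, e^{i_1}\wedge\cdots\wedge e^{i_m}$. Since $e^I$ and $\eta$ have constant coefficients at $p$ in the parallel frame, both $de^I$ and $d(Ce^I)$ vanish at $p$; combined with $C|_{\Om^0(M)} = 0$ and the derivation property of $C$, the identity $d_C = dC - (-1)^{\tilde C} Cd$ collapses at $p$ to
\[
d_C(fe^I)|_p \;=\; \pm\,\underline{C}(df)\wedge e^I .
\]
It then remains to identify $\underline{C}(df) = \ast(\ast\eta \wedge df)$ with the contraction $\iota_{\nabla f}\eta$, which follows from the Hodge duality formula (\ref{E4}); expanding $df = \sum_i (e_i f)\,e^i$ gives $\underline{C}(df)\wedge e^I = \pm\sum_i (e_i f)(\iota_{e_i}\eta)\wedge e^I$, matching the formula obtained for $\{L_\eta, d^{\ast}\}(fe^I)|_p$ up to the expected sign.

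The principal difficulty I anticipate is not conceptual but bookkeeping: three families of signs must be tracked consistently, namely (i) the convention for $L_\eta$ (left versus right multiplication by $\eta$), (ii) the $(n,k)$-dependent sign in $d^{\ast} = \pm\ast d\ast$ and in (\ref{E4}), and (iii) the supercommutator signs $(-1)^{\tilde A\tilde B}$ for operators of parities $\tilde A,\tilde B$. A secondary subtlety is that $\{L_\eta, d^{\ast}\}$ is a priori not a derivation of $\Om^\ast(M)$, whereas $d_C$ is so by construction; the identity forces the right-hand side to behave as a derivation, which is essentially the content of the pointwise matching above and may be viewed as a K\"ahler-type identity adapted to a general parallel $k$-form $\eta$.
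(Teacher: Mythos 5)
The paper does not actually prove this lemma: it is quoted verbatim from \cite[Proposition 2.5]{Ver} and used as a black box, so there is no in-paper argument to compare against. Your strategy --- reduce to a pointwise identity at $p$ in a frame with $(\nabla_{e_i}e_j)(p)=0$, write $d=\sum_i e^i\wedge\nabla_{e_i}$ and $d^{\ast}=-\sum_i\iota_{e_i}\nabla_{e_i}$, use $\nabla\eta=0$ to push the covariant derivatives past $L_\eta$ and $C$, and match the resulting algebraic operators $\sum_i(\iota_{e_i}\eta)\wedge\nabla_{e_i}$ on both sides --- is sound and is essentially Verbitsky's own proof, so I regard the proposal as correct in approach. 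One caveat deserves more than the ``bookkeeping'' label you give it: the cancellation of the $(d^{\ast}\a)\wedge\eta$ terms is \emph{not} convention-independent. With $L_\eta\a=\a\wedge\eta$ as literally stated and $k$ odd (the case $k=1$ actually used in this paper), the supercommutator is the anticommutator $L_\eta d^{\ast}+d^{\ast}L_\eta$, and the Leibniz expansion gives $d^{\ast}L_\eta\a=L_\eta d^{\ast}\a-(-1)^{|\a|}\sum_i\nabla_{e_i}\a\wedge\iota_{e_i}\eta$, so the zeroth-order terms \emph{add} to $2\,L_\eta d^{\ast}\a$ instead of cancelling. The cancellation you describe occurs precisely when one takes left multiplication $e(\eta)\a=\eta\wedge\a$, for which $\iota_{e_i}(\eta\wedge\cdot)$ produces the sign $(-1)^{k}$ that matches the supercommutator sign; this is consistent with the paper's own identity (\ref{E7}), $\mathcal{L}_U=-\de\circ e(\eta)-e(\eta)\circ\de$, which is the $k=1$ instance of the lemma. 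So your item (i) is not cosmetic: one of the two conventions makes the displayed identity false for odd $k$, and a complete write-up must fix the convention before the expansion, not after. Apart from this, your verification of the left-hand side ($d_C(fe^I)|_p=\pm\,\underline{C}(df)\wedge e^I$, using that $C$ is $C^{\infty}$-linear and kills functions) and the identification $\underline{C}(df)=\pm\,\iota_{(df)^{\sharp}}\eta$ are correct.
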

\begin{remark}
If $\eta$ is a parallel one form on $M$, then in fact the structure operator of $M$ is $C=\textit{i}_{\eta}$. The operator $d_{C}$ is the Lie derivative $\mathcal{L}_{\eta}$ since $\{d,\textit{i}_{\eta}\}=\mathcal{L}_{\eta}$.
\end{remark}
We recall some results which proved by Verbitsky (See \cite[Proposition 2.5 and Corollary 2.9]{Ver}  ).
\begin{proposition}\label{P5}
Let $M$ be a Riemannian manifold equipped with a parallel differential $k$-form $\eta$, $d_{C}$ the twisted de Rham operator constructed above and $d^{\ast}_{C}$ its Hermitian adjoint. Then,\\
(i) The following supercommutators vanish:
$$\{d,d_{C}\}=0,\ \{d,d_{C}^{\ast}\}=0,\ \{d^{\ast},d_{C}\}=0,\ \{d^{\ast},d_{C}^{\ast}\}=0.$$
(ii) The Laplacian $\De=\{d,d^{\ast}\}$ commutes with $L_{\eta}:\a\mapsto\a\wedge\eta$ and its adjoint operator $\La_{\eta}$ which is denoted as $\La_{\eta}:\Om^{i}(M)\rightarrow\Om^{i-k}(M)$.
\end{proposition}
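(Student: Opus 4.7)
The plan is to derive both parts essentially algebraically from the super-Jacobi identity, using two representations of $d_{C}$, namely its definition $d_{C}=\{d,C\}$ and the identity $d_{C}=\{L_{\eta},d^{\ast}\}$ from Lemma \ref{L4}, together with the trivial inputs $\{d,d\}=2d^{2}=0$ and $\{d^{\ast},d^{\ast}\}=2(d^{\ast})^{2}=0$. The only place the parallel hypothesis $\na\eta=0$ enters beyond Lemma \ref{L4} is that it forces $\eta$ to be closed, so that $\{d,L_{\eta}\}=\pm L_{d\eta}=0$. I would establish (i) first and then deduce (ii) with one more application of super-Jacobi.

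For (i), start with $\{d,d_{C}\}$. Using $d_{C}=\{d,C\}$ and the super-Jacobi identity with $A=B=d$,
\begin{equation*}
\{d,\{d,C\}\}=\{\{d,d\},C\}-\{d,\{d,C\}\},
\end{equation*}
and $\{d,d\}=0$ forces $2\{d,d_{C}\}=0$. Exactly the same argument applied to the formal adjoint $d_{C}^{\ast}=\{d,\Lambda_{\eta}\}$ of the identity in Lemma \ref{L4} gives $\{d,d_{C}^{\ast}\}=0$. For $\{d^{\ast},d_{C}\}$ I switch to $d_{C}=\{L_{\eta},d^{\ast}\}$ and apply super-Jacobi with $A=C=d^{\ast}$, $B=L_{\eta}$:
\begin{equation*}
\{d^{\ast},\{L_{\eta},d^{\ast}\}\}=\{\{d^{\ast},L_{\eta}\},d^{\ast}\}+(-1)^{k}\{L_{\eta},\{d^{\ast},d^{\ast}\}\}.
\end{equation*}
The second summand vanishes. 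Using the graded antisymmetry $\{d^{\ast},L_{\eta}\}=-(-1)^{k}d_{C}$ and $\{d_{C},d^{\ast}\}=(-1)^{k}\{d^{\ast},d_{C}\}$, the first summand collapses to $-\{d^{\ast},d_{C}\}$, so $2\{d^{\ast},d_{C}\}=0$. Finally, $\{d^{\ast},d_{C}^{\ast}\}=0$ follows by taking the formal adjoint of $\{d,d_{C}\}=0$.

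For (ii), write $\Delta=\{d,d^{\ast}\}$ and apply super-Jacobi with $A=L_{\eta}$, $B=d$, $C=d^{\ast}$:
\begin{equation*}
\{L_{\eta},\{d,d^{\ast}\}\}=\{\{L_{\eta},d\},d^{\ast}\}+(-1)^{k}\{d,\{L_{\eta},d^{\ast}\}\}.
\end{equation*}
The first term vanishes because $\{L_{\eta},d\}=\pm L_{d\eta}=0$, and the second term is $(-1)^{k}\{d,d_{C}\}=0$ by (i). Since $\Delta$ is even, $\{L_{\eta},\Delta\}$ coincides with the ordinary commutator $[L_{\eta},\Delta]$, so $[L_{\eta},\Delta]=0$; taking the formal adjoint yields $[\Lambda_{\eta},\Delta]=0$. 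The only delicate bookkeeping, and the step I would scrutinize most carefully, is tracking the $(-1)^{k}$ factors arising from graded antisymmetry $\{A,B\}=-(-1)^{\tilde{A}\tilde{B}}\{B,A\}$ and from the parity $k+1$ of $d_{C}$; once the sign conventions are pinned down, no analytic input beyond $\na\eta=0$ is needed.
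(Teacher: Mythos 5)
Your argument is correct in substance, but note that the paper itself supplies no proof of Proposition \ref{P5}: it is quoted verbatim from Verbitsky \cite[Proposition 2.5 and Corollary 2.9]{Ver}, so there is no internal proof to compare against. Your derivation --- super-Jacobi applied to the two presentations $d_{C}=\{d,C\}$ and $d_{C}=\{L_{\eta},d^{\ast}\}$, plus $\{d,d\}=\{d^{\ast},d^{\ast}\}=0$ and adjunction --- is essentially Verbitsky's own route, and all four identities in (i) check out: $\{d,\{d,C\}\}=\tfrac12\{\{d,d\},C\}=0$ needs nothing but $d^{2}=0$; the adjoint of Lemma \ref{L4} does give $d_{C}^{\ast}=\{d,\La_{\eta}\}$; and your sign bookkeeping $\{d^{\ast},L_{\eta}\}=-(-1)^{k}d_{C}$, $\{d_{C},d^{\ast}\}=(-1)^{k}\{d^{\ast},d_{C}\}$ (using $\tilde{d_{C}}=k+1$) correctly collapses $\{d^{\ast},\{L_{\eta},d^{\ast}\}\}$ to $-\{d^{\ast},d_{C}\}$.

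The one point you flag as delicate is indeed the only genuine issue, and it is worth resolving explicitly: the vanishing $\{d,L_{\eta}\}=\pm L_{d\eta}=0$ used in part (ii) holds for the \emph{left}-wedging operator $e(\eta):\a\mapsto\eta\wedge\a$, for which $\{d,e(\eta)\}\a=d\eta\wedge\a$. With the right-wedging convention $L_{\eta}:\a\mapsto\a\wedge\eta$ as literally transcribed in the statement, one gets $\{d,L_{\eta}\}\a=\bigl(1-(-1)^{k}\bigr)d\a\wedge\eta$, which does \emph{not} vanish when $k$ is odd --- and $k=1$ is precisely the case this paper uses. The two conventions differ by the degree-dependent sign $(-1)^{k|\a|}$, so this is not cosmetic; you should either work throughout with $e(\eta)$ (consistent with equation (\ref{E3}) of the paper and with Verbitsky's conventions, under which Lemma \ref{L4} is stated) or rewrite the supercommutator identities accordingly. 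Once that convention is fixed, your part (ii) is fine: the first Jacobi summand dies because $\eta$ is closed, the second is $(-1)^{k}\{d,d_{C}\}=0$ by (i), $\{L_{\eta},\De\}=[L_{\eta},\De]$ since $\De$ is even, and adjunction with $\De^{\ast}=\De$ gives $[\La_{\eta},\De]=0$.
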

Following Proposition \ref{P5}, if $\a$ is a harmonic form on $M$, then $\a\wedge\eta$ is harmonic.
\begin{corollary}\label{C1}
Let $M$ be a complete manifold and $\eta$ a closed non-zero $1$-form on $M$. Suppose that $g$ is a Riemannian metric on $M$ such that $\eta$ is parallel with respect to $g$: $\na\eta=0$. If $\a$ is a $L^{2}$-harmonic $k$-form on $M$, then $\eta\wedge\a$ is a $L^{2}$-harmonic $(k+1)$-from.
\end{corollary}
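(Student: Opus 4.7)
The plan is to verify that $\eta\wedge\alpha$ lies in $\Omega^{k+1}_{(2)}(M)$ and is annihilated by both $d$ and $\delta$, organised in three steps: an $L^{2}$ bound, Hodge--Laplace harmonicity via Proposition \ref{P5}, and an upgrade from $\Delta$-harmonic plus $L^{2}$ to $d$- and $\delta$-closed via a cutoff argument on the complete manifold.

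For the first step, since $\nabla\eta=0$ the function $|\eta|^{2}=g(\eta,\eta)$ is constant on $M$, so $|\eta\wedge\alpha|\leq |\eta|\cdot|\alpha|$ pointwise, giving $\|\eta\wedge\alpha\|_{L^{2}}\leq |\eta|\cdot\|\alpha\|_{L^{2}}<\infty$. For the second step, Proposition \ref{P5}(ii) asserts that $[\Delta,L_{\eta}]=0$. Since $\alpha\in\mathcal{H}^{k}_{(2)}(M)$ satisfies $d\alpha=\delta\alpha=0$ and in particular $\Delta\alpha=0$, we conclude
\[
\Delta(\eta\wedge\alpha)=L_{\eta}\Delta\alpha=0
\]
pointwise, so $\eta\wedge\alpha$ is an $L^{2}$ section of $\ker\Delta$.

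For the third step I would invoke the standard Andreotti--Vesentini/Gaffney fact: on a complete Riemannian manifold, every $\beta\in\Omega^{\ell}_{(2)}(M)$ with $\Delta\beta=0$ automatically satisfies $d\beta=0$ and $\delta\beta=0$. The proof uses exactly the family of cutoffs $f_{j}$ already employed in the proof of Theorem \ref{T1}: expanding $0=(\Delta\beta,f_{j}^{2}\beta)$ and integrating by parts bounds $\|f_{j}\,d\beta\|^{2}+\|f_{j}\,\delta\beta\|^{2}$ by a constant times $\int_{B_{j+1}\setminus B_{j}}|\beta|^{2}$, which tends to $0$ as $j\to\infty$ by dominated convergence, forcing $d\beta=\delta\beta=0$. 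Applying this to $\beta=\eta\wedge\alpha$ concludes the argument.

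The only real obstacle is this third step, which uses completeness of $M$ essentially in order to exhaust $M$ by relatively compact balls and to ensure the cutoff tails $\int_{B_{j+1}\setminus B_{j}}|\eta\wedge\alpha|^{2}$ vanish in the limit. A purely pointwise verification of $\delta(\eta\wedge\alpha)=0$ from $\nabla\eta=0$ reduces, after using Cartan's formula and $d\alpha=0$, to establishing $d\iota_{U}\alpha=0$, which does not appear immediate without reintroducing a global $L^{2}$ argument; hence the Laplace-plus-cutoff route is the natural one.
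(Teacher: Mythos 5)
Your proof is correct and follows essentially the same route as the paper: constancy of $|\eta|$ from $\nabla\eta=0$ gives the $L^{2}$ bound, Proposition \ref{P5}(ii) gives $\Delta(\eta\wedge\alpha)=L_{\eta}\Delta\alpha=0$, and one then passes from $\Delta$-harmonic plus $L^{2}$ to $d$- and $\delta$-closed. The only difference is that you spell out the Gaffney-type cutoff argument for this last step, which the paper leaves implicit.
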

\begin{proof}
Since the $1$-form $\eta$ is a parallel, $|\eta|=const.$ Therefore, $\eta\wedge\a\in\Om^{k+1}_{(2)}(M)$. Following Proposition \ref{P5}, it implies that $$\De(\eta\wedge\a)=0.$$ 
Then, we have $d(\eta\wedge\a)=0$ and $d^{\ast}(\eta\wedge\a)=0$.
\end{proof}
Let $(M,g)$ be a complete Riemannian manifold. A differential form $\a$ is called $d$(bounded) if there exists a form $\b$ on $M$ such that $\a=d\b$ and
$$\|\b\|_{L^{\infty}(M,g)}=\sup_{x\in M}|\b(x)|_{g}<\infty.$$
It is obvious that if $M$ is compact, then every exact form is $d$(bounded). However, when $M$ is not compact, there exist smooth differential forms which are exact but not $d$(bounded). For instance, on $\mathbb{R}^{n}$, $\a=dx^{1}\wedge\cdots\wedge dx^{n}$ is exact, but it is not $d$(bounded).

Let us recall some concepts introduced by Cao-Xavier in \cite{CX}. A differential form $\a$ on a complete non-compact Riemannian manifold $(M,g)$ is called $d$(sublinear) if there exist a differential form $\b$ and a number $c>0$ such that
$\a=d\b$
and
\begin{equation*}
\begin{split}
&|\a(x)|_{g}\leq c,\\
& |\b(x)|_{g}\leq c(1+\rho(x,x_{0})),\\
\end{split}
\end{equation*}
where $\rho(x,x_{0})$ stands for the Riemannian distance between $x$ and a base point $x_{0}$ with respect to $g$.

In \cite{Huang}, the author extended the idea of Cao-Xavier's \cite{CX} to the case of Riemannian manifold equipped with a parallel differential form. We then have a result as follows. Here, we give a proof in detail for the reader’s convenience.
\begin{theorem}\label{T9}\cite[Theorem 2.9]{Huang}
Let $(M,\eta)$ be a Riemannian manifold equipped with a parallel differential $k$-form $\eta$. If $\eta=d\b$ is $d$(sublinear), then for any $\a\in\mathcal{H}^{p}_{(2)}(X)$, we have
$$\eta\wedge\a=0.$$
\end{theorem}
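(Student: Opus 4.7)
The plan is to run the Cao--Xavier cutoff argument, adapted to the parallel-form setting. Two preparatory facts are already available: (i) by Proposition \ref{P5}(ii) the Laplacian $\Delta$ commutes with $L_{\eta}$, and since $|\eta|$ is a (positive) constant because $\eta$ is parallel, we have $\eta\wedge\alpha\in\Omega^{\ast}_{(2)}(M)$; hence $\eta\wedge\alpha$ is $L^{2}$-harmonic on the complete manifold $M$, and in particular $d^{\ast}(\eta\wedge\alpha)=0$; (ii) from $d\alpha=0$ and $\eta=d\beta$, the graded Leibniz rule gives the algebraic identity
$$\eta\wedge\alpha=d(\beta\wedge\alpha).$$

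I fix a base point $x_{0}$ and choose cutoffs $f_{j}\in C_{c}^{\infty}(M)$ with $0\leq f_{j}\leq 1$, $f_{j}\equiv 1$ on $B_{j/2}(x_{0})$, $\operatorname{supp} f_{j}\subset B_{j}(x_{0})$, and $|df_{j}|\leq c_{1}/j$ (built by composing a fixed bump on $\mathbb{R}$ with $\rho(x,x_{0})/j$). Set
$$I_{j}:=\int_{M}f_{j}^{2}|\eta\wedge\alpha|^{2}\,d\mathrm{Vol}=(f_{j}^{2}\,d(\beta\wedge\alpha),\,\eta\wedge\alpha).$$
Writing $f_{j}^{2}\,d(\beta\wedge\alpha)=d(f_{j}^{2}\beta\wedge\alpha)-2f_{j}\,df_{j}\wedge\beta\wedge\alpha$, the exact piece integrates by parts against $\eta\wedge\alpha$ and vanishes because $d^{\ast}(\eta\wedge\alpha)=0$ and $f_{j}^{2}\beta\wedge\alpha$ has compact support. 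Cauchy--Schwarz on the remaining cross term then yields
$$I_{j}\leq 4J_{j},\qquad J_{j}:=\int_{M}|df_{j}|^{2}|\beta|^{2}|\alpha|^{2}\,d\mathrm{Vol}.$$

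It remains to show $J_{j}\to 0$: the $d$(sublinear) hypothesis bounds $|\beta(x)|\leq c(1+\rho(x,x_{0}))\leq c(1+j)$ on $\operatorname{supp}(df_{j})$, while $|df_{j}|\leq c_{1}/j$, so the prefactor is bounded independently of $j$ and
$$J_{j}\leq \text{const}\cdot\int_{B_{j}\setminus B_{j/2}}|\alpha|^{2}\,d\mathrm{Vol}\longrightarrow 0,$$
by $\alpha\in L^{2}$. Hence $I_{j}\to 0$, and monotone convergence ($f_{j}^{2}\nearrow 1$) forces $\eta\wedge\alpha\equiv 0$. I expect the only delicate point to be precisely this balance: the naive cutoff used in the proof of Theorem \ref{T1} (where $|df_{j}|=O(1)$ on an annulus at distance $\sim j$) would produce an estimate that \emph{grows} with $j$; one must instead take $|df_{j}|\lesssim 1/j$ on an annulus of comparable radius $j$, so that the $(1+j)/j$ factors cancel and the remaining smallness comes entirely from the $L^{2}$-tail of $\alpha$.
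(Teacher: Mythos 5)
Your proposal is correct, and its skeleton coincides with the paper's: both exploit $d^{\ast}(\eta\wedge\a)=0$ (via Proposition \ref{P5} and completeness) together with the identity $\eta\wedge\a=d(\b\wedge\a)$, and both pair this against a cutoff multiple of $\b\wedge\a$ so that only a boundary-annulus error term survives. Where you genuinely diverge is in how that error term is killed. The paper keeps the narrow cutoff of Theorem \ref{T1} (gradient of size $O(1)$ supported in $B_{j+1}\setminus B_{j}$), accepts the resulting bound $(j+1)\int_{B_{j+1}\setminus B_{j}}|\a|^{2}$, and then extracts a subsequence along which this tends to $0$ by a pigeonhole argument using the divergence of $\sum 1/(j+1)$. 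You instead use wide annuli with $|df_{j}|\lesssim 1/j$ plus Cauchy--Schwarz (the $f_{j}^{2}$ trick), so the factor $(1+j)/j$ stays bounded and the full sequence $I_{j}$ converges to $0$, the smallness coming from the $L^{2}$-tail of $\a$. Your version is slightly cleaner in that it avoids passing to a subsequence; the paper's version shows that the coarser cutoff already suffices. One small correction to your closing remark: the growing estimate produced by the naive cutoff is not fatal --- the paper's subsequence argument handles it --- so the $1/j$ gradient is a convenience, not a necessity. Also note that $\lim I_{j}=0$ gives $\eta\wedge\a=0$ most directly by observing $\int_{B_{j/2}}|\eta\wedge\a|^{2}\leq I_{j}$ (Fatou rather than monotone convergence, since the $f_{j}$ need not be monotone in $j$), but this is cosmetic.
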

\begin{proof}
Let $\{f_{j}\}_{j=0,1,2,\cdots}$ be the compactly supported functions which are the same as the functions in the proof of Theorem \ref{T1}. Let $\a$ be a harmonic $p$-form in $L^{2}$, and consider the form $\nu=\b\wedge\a$. Observing that $d^{\ast}(\eta\wedge\a)=0$ since $\eta\wedge\a\in\mathcal{H}_{(2)}^{p+k}(X)$  and noticing that $f_{j}\nu$ has compact support, one has
\begin{equation}\label{E23}
\begin{split}
0&=\big{(}d^{\ast}(\eta\wedge\a),f_{j}\nu\big{)}\\
&=\big{(}\eta\wedge\a,d(f_{j}\nu)\big{)}\\
&=\big{(}\eta\wedge\a, f_{j}\eta\wedge\a\big{)}+\big{(}\eta\wedge\a, df_{j}\wedge\b\wedge\a\big{)}.\\
\end{split}
\end{equation}
Since $0\leq f_{j}\leq 1$ and $\lim_{j\rightarrow\infty}f_{j}(x)(\eta\wedge\a)(x)=(\eta\wedge\a)(x)$, it follows from the dominated convergence theorem that
\begin{equation}\label{E24}
	\lim_{j\rightarrow\infty}\big{(}\eta\wedge\a, f_{j}\eta\wedge\a\big{)}=\|\eta\wedge\a\|^{2}.
\end{equation}
	Since $\eta$ is bounded, $supp(df_{j})\subset B_{j+1}\backslash B_{j}$ and $|\b(x)|=O(\rho(x_{0},x))$, one obtains
	\begin{equation}\label{E22}
|\big{(}\eta\wedge\a,df_{j}\wedge\b\wedge\a\big{)}|\leq (j+1)C\int_{B_{j+1}\backslash B_{j}}|\a(x)|^{2}d{\rm{Vol}},
	\end{equation}
	where $C$ is a constant independent of $j$.
	
	We claim that there exists a subsequence $\{j_{i}\}_{i\geq1}$ such that
	\begin{equation}\label{E21}
	\lim_{i\rightarrow\infty}(j_{i}+1)\int_{B_{j_{i}+1}\backslash B_{j_{i}}}|\a(x)|^{2}d{\rm{Vol}}=0.
	\end{equation}
	If not, there would exist a positive constant $a$ such that
	$$\lim_{i\rightarrow\infty}(j_{i}+1)\int_{B_{j_{i}+1}\backslash B_{j_{i}}}|\a(x)|^{2}{\rm{Vol}}\geq a>0,\ j\geq1.$$
	This inequality implies
	\begin{equation}\nonumber
	\begin{split}
	\int_{M}|\a(x)|^{2}{\rm{Vol}}&=\sum_{j=0}^{\infty}\int_{B_{j+1}\backslash B_{j}}|\a(x)|^{2}{\rm{Vol}}\\
	&\geq a\sum_{j=0}^{\infty}\frac{1}{j+1}\\
	&=+\infty\\
	\end{split}
	\end{equation}
 a contradiction to the assumption $\int_{M}|\a(x)|^{2}\rm{Vol}<\infty$. Hence, there exists a subsequence $\{j_{i}\}_{i\geq1}$ for which (\ref{E21}) holds. Using (\ref{E22}) and (\ref{E21}), one obtains
	\begin{equation}\label{E25}
	\lim_{i\rightarrow\infty}\big{(}\eta\wedge\a, df_{j}\wedge\b\wedge\a\big{)}=0
	\end{equation}
	It now follows from (\ref{E23}), (\ref{E24}) and (\ref{E25}) that $\eta\wedge\a=0$.
\end{proof}
There is a very-known result. Let $M$ be a compact Riemannian manifold, $\a$ be a closed $1$-from and $\pi:\tilde{M}\rightarrow M$ be the universal covering. Then the pull back form $\pi^{\ast}(\a)$ is $d$(sublinear) (see \cite[Proposition 1]{JZ}). We observe a useful lemma as follows.
\begin{lemma}\label{L1}
Let $M$ be a complete, non-compact Riemannian manifold. If the $C^{\infty}$-function $f$ on $M$ satisfies $\na^{2}f=0$, then for any $x\in M$,
$$|f(x)|\leq c(\rho(x,x_{0})+1),$$
where $c$ is a uniform positive constant.	
\end{lemma}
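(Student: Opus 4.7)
The plan is to exploit the condition $\nabla^{2}f=0$ to conclude that $\nabla f$ is a parallel vector field, so that its norm is constant, and then integrate $df$ along a minimizing geodesic to obtain the claimed linear growth.

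First, I would interpret $\nabla^{2}f=0$ as saying that the Hessian of $f$ vanishes identically, i.e.\ $\nabla(\nabla f)=0$. This is equivalent to saying that the $1$-form $df$ is parallel with respect to the Levi--Civita connection. In particular, $|\nabla f|_{g}$ is constant on $M$; call this constant $c_{1}\geq 0$. (If $c_{1}=0$ then $f$ is constant and the bound is trivial with $c=|f(x_{0})|$, so we may assume $c_{1}>0$.)

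Next I would use that $(M,g)$ is complete: by the Hopf--Rinow theorem, for every $x\in M$ there exists a minimizing geodesic $\gamma\colon[0,\rho(x,x_{0})]\to M$ parametrized by arc length with $\gamma(0)=x_{0}$ and $\gamma(\rho(x,x_{0}))=x$. Along such a curve we have
\begin{equation*}
f(x)-f(x_{0})=\int_{0}^{\rho(x,x_{0})}\frac{d}{dt}f(\gamma(t))\,dt=\int_{0}^{\rho(x,x_{0})}\langle\nabla f(\gamma(t)),\gamma'(t)\rangle\,dt.
\end{equation*}
Since $|\gamma'(t)|=1$ and $|\nabla f|\equiv c_{1}$, the Cauchy--Schwarz inequality gives $|f(x)-f(x_{0})|\leq c_{1}\,\rho(x,x_{0})$.

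Finally I would close with the triangle inequality: setting $c:=\max\{c_{1},|f(x_{0})|\}$,
\begin{equation*}
|f(x)|\leq |f(x_{0})|+c_{1}\,\rho(x,x_{0})\leq c\bigl(\rho(x,x_{0})+1\bigr),
\end{equation*}
which is the desired estimate. There is essentially no obstacle here; the only point to be careful about is reading $\nabla^{2}f=0$ as the vanishing Hessian (which is consistent with the parallel-form framework of the paper) rather than as the Laplacian. Under that reading the argument reduces to the standard observation that a function with parallel gradient is Lipschitz with Lipschitz constant $|\nabla f|$.
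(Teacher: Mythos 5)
Your proof is correct and follows essentially the same route as the paper's: deduce from the vanishing Hessian that $|\nabla f|$ is constant, then integrate $df$ along a minimizing geodesic furnished by completeness. You have merely filled in the details the paper leaves implicit (Hopf--Rinow, the Cauchy--Schwarz step, and the final triangle inequality absorbing $|f(x_{0})|$).
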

\begin{proof}
Sine $\na^{2}f=0$, $|\na f|=const.$ Let $x_{0}$ be a fix point on $M$. For any point $x$ in $M$, there exists a geodesic  $s:[0,1]\rightarrow M$ such that $s(0)=x_{0}$ and $s(1)=x$. Thus
$$|f(s(1))-f(s(0))|\leq c\rho|\na f|,$$
where $\rho$ is the Riemannian distance between $x_{0}$ and $x$, $c$ is a positive constant independent  on $x\in M$.
\end{proof}
We then have
\begin{lemma}\label{P9}
Let $M$ be a complete, simply-connected manifold and $\eta$ a closed non-zero $1$-form on $M$. Suppose that $g$ is a Riemannian metric on $M$ such that $\eta$ is parallel with respect to $g$: $\na\eta=0$. Then for any $L^{2}$-harmonic $k$-form $\a$ on $M$, we have
\begin{equation}
e(\eta)(\a)=0,\ \textit{i}_{U}(\a)=0.
\end{equation}
In particular, for any $k\geq0$, $$\mathcal{H}^{k}_{(2)}(M)\subset\mathcal{H}^{k}_{(2),\eta}(M).$$
\end{lemma}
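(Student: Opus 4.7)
The plan is to reduce everything to Theorem \ref{T9} by showing that the parallel closed $1$-form $\eta$ is $d$(sublinear) on the simply-connected manifold $M$, then handle the interior product $\textit{i}_{U}$ by Hodge duality.

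First, since $M$ is simply-connected and $\eta$ is closed, I would invoke the Poincar\'{e} lemma in its global form to write $\eta = df$ for some $f \in C^{\infty}(M)$. The parallelism $\na\eta=0$ then reads $\na^{2}f = 0$, so Lemma \ref{L1} applies and yields a uniform constant $c>0$ with $|f(x)|\leq c(\rho(x,x_{0})+1)$. Since $\eta$ is parallel and non-zero, $|\eta| = |\na f|$ is a positive constant, hence bounded. Taking $\b = f$, the pair $(\eta,\b)$ satisfies precisely the $d$(sublinear) conditions of Cao--Xavier. Thus $\eta$ is a $d$(sublinear) parallel $1$-form.

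Next, applying Theorem \ref{T9} (with $k=1$) to any $\a \in \mathcal{H}^{p}_{(2)}(M)$ immediately gives $e(\eta)(\a) = \eta\wedge\a = 0$. For the second identity, I would exploit that the Hodge star intertwines $L^{2}$-harmonics: if $\a \in \mathcal{H}^{k}_{(2)}(M)$, then $\ast\a \in \mathcal{H}^{n-k}_{(2)}(M)$ since $\ast$ is a pointwise isometry commuting with $\De$. Applying Theorem \ref{T9} to $\ast\a$ yields $\eta\wedge\ast\a = 0$, and the formula (\ref{E4}),
\[
\textit{i}_{U}(\a) = (-1)^{nk+n}\ast\!\bigl(\eta\wedge\ast\a\bigr),
\]
then forces $\textit{i}_{U}(\a)=0$.

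Finally, combining $e(\eta)\a=0$ with $d\a=0$ gives $d_{\eta}\a = d\a + e(\eta)\a = 0$, and combining $\textit{i}_{U}\a=0$ with $\de\a=0$ gives $\de_{\eta}\a = \de\a + \textit{i}_{U}\a = 0$, so $\a \in \mathcal{H}^{k}_{(2),\eta}(M)$ and the inclusion $\mathcal{H}^{k}_{(2)}(M)\subset \mathcal{H}^{k}_{(2),\eta}(M)$ follows. The only real subtlety is verifying that the potential $f$ obtained by integrating $\eta$ against paths really does grow at most linearly; this is exactly what Lemma \ref{L1} furnishes from the Hessian-free condition, so the argument is essentially a bookkeeping of the previously established tools.
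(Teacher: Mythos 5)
Your argument is correct and follows essentially the same route as the paper's own proof: write $\eta=df$ by simple-connectedness, deduce $\na^{2}f=0$ and apply Lemma \ref{L1} to conclude $\eta$ is $d$(sublinear), invoke Theorem \ref{T9} to kill $\eta\wedge\a$, and handle $\textit{i}_{U}(\a)$ by applying the same theorem to $\ast\a$ via formula (\ref{E4}). The only difference is that you spell out a few routine verifications (boundedness of $|\eta|$, that $\ast$ preserves $L^{2}$-harmonicity) that the paper leaves implicit.
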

\begin{proof}
Since $M$ is simply-connected and $\eta$ is a closed $1$-form on $M$, there is a function $f$ on $M$ such that $\eta=df$. Therefore, $\na^{2}f=0$ since $\na\eta=0$. By the Lemma \ref{L1}, $\eta$ is $d$(sublinear). Let $\a$ be a $L^{2}$-harmonic $k$-form. Then, $\ast\a$ is also a $L^{2}$-harmonic $(n-k)$-form. Following Theorem \ref{T9}, it implies that
\begin{equation*}
\begin{split}
&e(\eta)(\a)=\eta\wedge\a=0,\\
&\textit{i}_{U}(\a)=(-1)^{nk+n}\ast e(\eta)(\ast\a)=0.\\
\end{split}
\end{equation*}
Therefore, 
$$d_{\eta}\a=d\a+e(\eta)\a=0$$ 
and 
$$\de_{\eta}\a=\de\a+\textit{i}_{U}\a=0,$$ 
i.e., $\a\in \mathcal{H}^{k}_{(2),\eta}(M)$.
\end{proof}
\begin{proof}[\textbf{Proof of Theorem \ref{T3}}.]
The conclusion follows from Theorem \ref{T1} and Lemma \ref{P9}.
\end{proof}
\subsection{Vaisman manifolds}
In this section we will give the necessary definitions and properties of locally conformally K\"{a}hler (LCK) manifolds.  Let $M$ be a connected, smooth manifold of complex dimension $n$ and $J$ be an integrable complex structure of $M$. For a Hermitian metric $g$ on $M$, we denote by $\na$ the Levi-Civita connection and by $\w$ the fundamental two-form defined as $\w(X,Y)=g(JX,Y)$.

A locally conformally K\"{a}hler manifold is a complex manifold $X$ covered by a system of open subsets $U_{\a}$ endowed with local K\"{a}hler metrics $g_{\a}$,  conformal on overlaps $U_{\a}\cap U_{\b}$: $g_{\a}=c_{\a\b}g_{\b}$. The metrics $e^{-f_{\a}}g_{\a}$ glue to a global metric whose associated $2$-form $\w$ satisfies the integrability condition $d\w=\theta\wedge\w$, thus being locally conformal with the K\"{a}hler metrics $g_{\a}$. Here $\theta|_{U_{\a}}=df_{\a}$. The closed $1$-form $\theta$ is called the Lee form. This gives another definition of an LCK structure, which will be used in this paper.
\begin{definition}
	Let $(M,g,\w)$ be a complex Hermitian manifold, $\dim_{\C}M>1$, with $$d\w=\theta\wedge\w,$$ 
	where $\theta$ is a closed 1-form. Then $M$ is called a locally conformally K\"{a}hler (LCK) manifold.
\end{definition}
If one performs a conformal change, $\w_{1}=e^{f}\w$, the Lee form  $\theta$ changes to $\theta_{1}=\theta+df$. The cohomology class $[\theta]\in H^{1}_{dR}(M)$ is an important invariant of an LCK-manifold. Clearly, $[\theta]\in H^{1}_{dR}(M)$ vanishes if and only if $\w$ is conformally equivalent to a K\"{a}hler structure. In this case $(M,\w)$ is called globally conformally K\"{a}hler.

A compact LCK manifold never admits a K\"{a}hler structure, unless the cohomology class $[\theta]\in H_{dR}^{1}(M)$ vanishes  \cite{Vaisman1980}. In many situations, the LCK structure becomes useful for the study of topology and complex geometry of a non-K\"{a}hler manifold.  An LCK-form $\w$ on an LCK-manifold satisfies $d\w=\w\wedge\theta$, therefore it is $d_{-\theta}$-closed. The cohomology class $[\w]\in H^{2}_{\theta}(M)$ is called the Morse–Novikov class of the LCK-manifold. It is an invariant of the LCK-manifold, roughly analogous to the K\"{a}hler class on a K\"{a}hler manifold. In \cite{OV2009}, the author defined three cohomology invariants, the Lee class, the Morse-Novikov class, and the Bott-Chern class, of an LCK-structure. These invariants play together the same role as the K\"{a}hler class in K\"{a}hler geometry  \cite{OV2010,OV2012,OV2018}.  Among the LCK manifolds, a distinguished class is the following:
\begin{definition}
	Let $(M,g,\w,\theta)$ be an LCK manifold and $\na$ its Levi-Civita connection.  We say that $M$ is an LCK manifold with parallel Lee form $\theta$, or Vaisman manifold, if $\na\theta=0$. 
\end{definition}
Let $(M,J,\theta)$ be a Vaisman manifold. Since the Lee form $\theta$ is parallel, $|\theta|=2c$ for some $c\in\mathbb{R}$, $c\neq 0$. We adopt the notations
$$u=|\theta|^{-1}\theta,\ U=u^{\sharp},\ v=-u\circ J,\ V=-JU.$$
We recall that given a real $(2n-1)$-dimensional $C^{\infty}$ differentiable manifold $N$ and $c\in\mathbb{R}$, $c\neq 0$, a $c$-Sasakian structure on $N$ is a synthetic object $(\psi,\xi,\eta,\gamma)$ consisting of a $(1,1)$-tensor field $\psi$, a vector field $\xi\in\mathcal{X}(N)$, a $1$-form $\eta$, and a Riemannian metric $\gamma$, satisfying the following identities:
$$\psi^{2}=-I+\eta\otimes\xi$$
$$\eta\circ\psi=0,\ \eta(\xi)=1$$
$$\gamma(\psi X,\psi Y)=\gamma(X,Y)-\eta(X)\eta(Y)$$
$$[\psi,\psi]+2(d\eta)\otimes\xi=0$$
$$d\eta=c\phi,$$
where the $2$-form $\phi$ is given by $\phi(X,Y)=\gamma(X,\psi Y)$. A $(2n-1)$-dimensional manifold $N$ carrying a $c$-Sasakian structure is a $c$-Sasakian manifold. Of course, one may always go back to a usual Sasakian structure by a transformation:
$$\hat{\psi}=\psi,\ \hat{\xi}=\frac{1}{c}\xi,\ \hat{\eta}=c\eta,\ \hat{\gamma}=c^{2}\gamma.$$
We then have
\begin{proposition}(\cite{Vaisman1979} and \cite[Proposition 5.1]{DO})
	Let $M$ be a Vaisman manifold. Let $S$ be a leaf of $\mathcal{F}_{0}$ and $i: S\hookrightarrow M$ the inclusion. Let $(\psi,\xi,\eta,\gamma)$ on $S$ be given by
	$$\psi=J\circ(di)+(i^{\ast}v)\otimes(U\circ i),\ \xi=V\circ i,\ \eta=i^{\ast}v,\ \gamma=i^{\ast}g,$$
	Then $(\psi,\xi,\eta,\gamma)$ is a $c$-Sasakian structure on $S$.
\end{proposition}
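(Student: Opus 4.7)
The plan is to verify the five defining axioms of a $c$-Sasakian structure directly from the given formulas, exploiting the rigidity supplied by $\na\theta=0$. First I would identify the foliation $\mathcal{F}_{0}$ as the integrable distribution $\ker\theta$: since $d\theta=0$ this codimension-one distribution is involutive, so each leaf $S$ is a $(2n-1)$-manifold. The antisymmetry of $\w$ gives $g(JU,U)=\w(U,U)=0$, hence $\theta(V)=0$ and $V$ is tangent to every leaf, so $\xi=V\circ i$ is well defined. For $X\in TS$ the relation $g(\psi X,U)=g(JX,U)+v(X)g(U,U)=g(JX,U)-u(JX)=0$ shows that $\psi X=JX+v(X)U$ is the tangential projection of $JX$ along the $\mathbb{R}U$-direction, so $\psi$ is a genuine $(1,1)$-tensor on $S$.

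Next I would collect the Vaisman identities to be used repeatedly: from $\na\theta=0$ we get that $|\theta|$ is constant (justifying $|\theta|=2c$), that $U$ and $V$ are Killing, and, using the LCK relation $d\w=\theta\wedge\w$ together with standard computations, that $\na V=0$, $[U,V]=0$ and $\mathcal{L}_{U}\w=\mathcal{L}_{V}\w=0$. With these in hand the algebraic axioms become routine: $\eta\circ\psi=0$ follows from $u|_{TS}=0$ combined with $J^{2}=-I$; the scalar $\eta(\xi)$ reduces to $\pm u(U)=\pm 1$, which the sign conventions are set up to equal $+1$; the computation $\psi^{2}X=-X+v(X)V$ for $X\in TS$, using $v(\psi X)=u(X)=0$, gives $\psi^{2}=-I+\eta\otimes\xi$; and $\gamma(\psi X,\psi Y)=\gamma(X,Y)-\eta(X)\eta(Y)$ reduces, after using that $J$ is an isometry of $g$, to a Pythagorean identity in the orthogonal decomposition $TM|_{S}=TS\oplus\mathbb{R}U$. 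The contact axiom $d\eta=c\phi$ is then extracted by computing $dv|_{S}$ from $d\w=\theta\wedge\w$ and $du=0$: one obtains a formula for $dv$ in terms of $|\theta|^{-1}\w$ modulo terms containing $u$, and the latter drop out upon pullback to $S$ since $i^{\ast}u=0$.

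The main obstacle I expect is the Nijenhuis condition $[\psi,\psi]+2(d\eta)\otimes\xi=0$. Expanding the Nijenhuis tensor of $\psi=J\circ(di)+(i^{\ast}v)\otimes(U\circ i)$ produces the Nijenhuis tensor of $J$, which vanishes by integrability of the complex structure, plus correction terms generated by the rank-one perturbation $(i^{\ast}v)\otimes U$. These corrections involve $dv$, brackets with $U$ and $V$, and $\mathcal{L}_{U}v$; grouping them requires simultaneously invoking $[U,V]=0$, $\mathcal{L}_{U}v=0$ and the formula for $dv$ derived above. If the bookkeeping is carried out carefully the cross-terms assemble into precisely $2(dv)(X,Y)V=2(d\eta)(X,Y)\xi$, closing the identity. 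This is the step where the Vaisman hypothesis $\na\theta=0$ is used most heavily and is the most error-prone piece of the verification.
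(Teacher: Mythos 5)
First, note that the paper offers no proof of this proposition: it is imported verbatim from Vaisman \cite{Vaisman1979} and Dragomir--Ornea \cite[Proposition 5.1]{DO}, so there is no in-paper argument to compare yours against; I can only judge your outline on its own terms. The overall strategy --- identify $\mathcal{F}_{0}$ with the integrable codimension-one distribution $\ker\theta$, check that $\psi X=JX+v(X)U$ is tangent to the leaf, and verify the axioms one by one --- is indeed the standard route taken in the cited sources, and your treatment of the purely algebraic axioms ($\eta\circ\psi=0$, $\psi^{2}=-I+\eta\otimes\xi$, the compatibility $\gamma(\psi X,\psi Y)=\gamma(X,Y)-\eta(X)\eta(Y)$) is correct up to the sign ambiguities that the paper itself creates by using both $\w(X,Y)=g(X,JY)$ and $\w(X,Y)=g(JX,Y)$.

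Two genuine problems remain. First, your list of ``Vaisman identities'' contains the claim $\na V=0$, which is false and in fact inconsistent with the rest of your own argument: since $dv(X,Y)=(\na_{X}v)(Y)-(\na_{Y}v)(X)$ for the torsion-free connection, parallelism of $V$ would force $d\eta=i^{\ast}(dv)=0$, contradicting the contact axiom $d\eta=c\phi$ with $\phi$ nondegenerate on $\ker\eta$. On a Vaisman manifold only $U$ is parallel; $V=-JU$ is Killing and holomorphic, but $\na_{X}V=-(\na_{X}J)U\neq 0$ precisely because $M$ is not K\"{a}hler. Second, the two steps that carry all the content --- the formula for $dv$ yielding $d\eta=c\phi$, and the normality condition $[\psi,\psi]+2(d\eta)\otimes\xi=0$ --- are only described, not carried out. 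Both require the explicit expression for $(\na_{X}J)Y$ on an LCK manifold with parallel Lee form (in terms of $\theta$, $J\theta$, $g$ and $\w$); without that formula the ``bookkeeping'' you defer cannot actually be closed, and with it the correct statement $\na_{X}V=c\,\psi X$ (up to convention) is what simultaneously produces the contact axiom and kills the correction terms in the Nijenhuis computation. So the outline has the right shape, but the proof is incomplete exactly where the Vaisman hypothesis does its work.
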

Using $$g=\gamma+u\otimes u$$ and the De Rham decomposition theorem, we obtain
\begin{proposition}(\cite{Vaisman1979} and \cite[Proposition 5.2]{DO})\label{P1} 
The universal Riemannian covering manifold  $M$ of a complete Vaisman manifold is the Riemannian product of a simply connected $c$-Sasakian manifold $N$, which is the universal covering space of a leaf $N$ of $\mathcal{F}_{0}$ and the real line.
\end{proposition}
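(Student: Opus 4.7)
The plan is to reduce the statement to the de Rham decomposition theorem by exploiting the parallel Lee form. Let $\pi: \tilde{M} \to M$ denote the Riemannian universal cover with the pulled-back metric; then $\tilde{M}$ is complete and simply connected, and $\tilde{\theta} := \pi^{\ast}\theta$ is a parallel, closed $1$-form of constant length $2c \neq 0$. Since $\tilde{M}$ is simply connected, one may write $\tilde{\theta} = df$ for some $f \in C^{\infty}(\tilde{M})$; the metric dual $\tilde{U} := \nabla f$ is then a parallel vector field of constant non-zero norm. Because $\tilde{U}$ is parallel, its integral curves are geodesics, and completeness of $\tilde{M}$ implies that these geodesics exist for all time, so the flow of $\tilde{U}$ is globally defined.

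Next, I would invoke the de Rham decomposition theorem. The line field $\mathbb{R}\tilde{U}$ and its orthogonal complement $\tilde{U}^{\perp}$ are two complementary parallel sub-bundles of $T\tilde{M}$, so on the complete and simply connected manifold $\tilde{M}$ they integrate to a global isometric splitting
\[
\tilde{M} \;\cong\; \mathbb{R} \;\times\; \tilde{N},
\]
in which the $\mathbb{R}$-factor is an integral curve of $\tilde{U}$ and $\tilde{N}$ is a maximal leaf of the totally geodesic foliation $\tilde{U}^{\perp}$. Since $\tilde{M}$ and $\mathbb{R}$ are simply connected, so is $\tilde{N}$.

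Now $\tilde{U}^{\perp} = \ker \tilde{\theta} = \pi^{-1}(\ker \theta)$ coincides with the pull-back of $\mathcal{F}_{0}$, so the leaf $\tilde{N}$ is a Riemannian covering of some leaf $N$ of $\mathcal{F}_{0}$ on $M$; being simply connected, it is \emph{the} universal cover of $N$. By the preceding proposition, $N$ carries a $c$-Sasakian structure $(\psi, \xi, \eta, \gamma)$; these are tensorial data satisfying purely local identities, hence they pull back along $\tilde{N} \to N$ to a $c$-Sasakian structure on $\tilde{N}$. Together with the de Rham splitting, this yields the claimed product description of $\tilde{M}$.

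The main obstacle is the passage from the pointwise (and infinitesimal) parallelism of $\tilde{U}$ to a genuinely global isometric product; this is exactly the content of de Rham's theorem, and the hypotheses of completeness and simple connectedness of $\tilde{M}$ are used precisely at this step. A secondary point to verify is that the Sasakian tensors on $N$ are compatible with the ambient splitting, which reduces to observing that $U$ is tangent to the $\mathbb{R}$-factor and $V = -JU$ lies in $\tilde{U}^{\perp}$, a direct consequence of the formulas $g = \gamma + u\otimes u$ and $\xi = V|_{N}$ from the preceding proposition.
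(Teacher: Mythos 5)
Your proposal is correct and follows essentially the same route the paper indicates: the paper states this result by citing Vaisman and Dragomir--Ornea, prefaced only by the remark that it follows from the splitting $g=\gamma+u\otimes u$ together with the de Rham decomposition theorem, which is exactly the argument you carry out in detail (parallel Lee form $\Rightarrow$ parallel line field and orthogonal complement $\Rightarrow$ global isometric splitting $\mathbb{R}\times\tilde{N}$, with $\tilde{N}$ the universal cover of a leaf of $\mathcal{F}_{0}$ carrying the pulled-back $c$-Sasakian structure). No genuinely different ideas are involved; your write-up simply supplies the details the paper leaves to the references.
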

\begin{proof}[\textbf{Proof of Theorem \ref{T2}}.]
The conclusion follows from Theorem \ref{T3}.
\end{proof}
\begin{remark}
Following Proposition \ref{P1}, it implies that every complete simply-connected Vaisman manifold is the Riemannnian product of the real line $\mathbb{R}$ and a complete simply-connected manifold $N$. A well-known fact is that the K\"{u}nneth formula holds for spaces of $L^{2}$-harmonic forms \cite{Zucker}. Since the spaces of $L^{2}$-harmonic forms on $\mathbb{R}$ is trivial, we get $\bar{H}^{k}_{(2)}(N\times\mathbb{R})=0$ for all $k$, i.e., there is no non-trivial $L^{2}$-harmonic forms on complete simply-connected Vaisman manifolds. Here $\bar{H}^{\ast}_{(2)}$ denotes reduced $L^{2}$-cohomology. 
\end{remark}
\section*{Acknowledgements}
We would like to thank Professor H.Y. Wang for drawing our attention to the Vaisman manifold and generously helpful suggestions about these. We would also like to thank the anonymous referee for careful reading of my manuscript and helpful comments. This work is supported by Natural Science Foundation of China No. 11801539 (Huang), No. 11701226 (Tan) and Natural Science Foundation of Jiangsu Province BK20170519 (Tan).

\bigskip
\footnotesize

\end{document}